\numberwithin{equation}{section}
\theoremstyle{plain}
\newtheorem{thm}{Theorem}[section]
\newtheorem{cor}[thm]{Corollary}
\newtheorem{lemma}[thm]{Lemma}
\newtheorem{prop}[thm]{Proposition}
\newtheorem{notation}[thm]{Notation}
\newtheorem{remark}[thm]{Remark}
\theoremstyle{remark}
\newtheorem{rem}[thm]{Remark}
\theoremstyle{definition}
\DeclareMathOperator{\Sing}{Sing}
\subjclass[2020]{primary: 15A69, 14Q20 
}
\newcommand\ignore[1]{}
\newcommand\CC{{\mathbb{C}}}
\def\C{{\mathbb C}}
\def\N{{\mathbb N}}
\def\P{{\mathbb P}}
\def\Q{{\mathbb Q}}
\def\Q{{\mathbb{Q}}}
\def\operatorname#1{\mathop{\rm #1}\nolimits}
\def\Sing{\operatorname{Sing}}
\def\det{\operatorname{det}}
\def\GL{\operatorname{GL}}
\newcommand{\pb}{\ar@{}[dr]|{\text{\pigpenfont J}}}
\newcommand{\xleftrightarrow}[2][]{\ext@arrow 3359\leftrightarrowfill@{#1}{#2}}
\newcommand{\xdasharrow}[2][->]{
\tikz[baseline=-\the\dimexpr\fontdimen22\textfont2\relax]{
\node[anchor=south,font=\scriptsize, inner ysep=1.5pt,outer xsep=2.2pt](x){#2};
\draw[shorten <=3.4pt,shorten >=3.4pt,dashed,#1](x.south west)--(x.south east);
}}
\lstdefinelanguage{Sage}[]{Python}
{morekeywords={False,sage,True},sensitive=true}
\definecolor{dblackcolor}{rgb}{0.0,0.0,0.0}
\definecolor{dbluecolor}{rgb}{0.01,0.02,0.7}
\definecolor{dgreencolor}{rgb}{0.2,0.4,0.0}
\definecolor{dgraycolor}{rgb}{0.30,0.3,0.30}
\begin{document}
\title{Ideals of spaces of degenerate matrices}


\author[Vill]{Julian Vill}
\address{	
	University of Konstanz, Germany, Fachbereich Mathematik und Statistik,
	D-78457 Konstanz, Germany
}
\email{julian.vill@uni-konstanz.de}

\author[Micha{\l}ek]{Mateusz Micha{\l}ek}
\address{
	University of Konstanz, Germany, Fachbereich Mathematik und Statistik, Fach D 197
	D-78457 Konstanz, Germany
}
\email{mateusz.michalek@uni-konstanz.de}

\author[Blomenhofer]{Alexander Taveira Blomenhofer}
\address{University of Konstanz, Germany, Fachbereich Mathematik und Statistik, 
	D-78457 Konstanz, Germany}
\email{alexander.taveira-blomenhofer@uni-konstanz.de}

\begin{abstract} 
	The variety $ \Sing_{n, m} $ consists of all tuples $ X = (X_1,\ldots, X_m) $ of  $ n\times n $ matrices such that every linear combination of $ X_1,\ldots, X_m $ is singular. Equivalently, $X\in\Sing_{n,m}$ if and only if $\det(\lambda_1 X_1 + \ldots + \lambda_m X_m) = 0$ for all $ \lambda_1,\ldots, \lambda_m\in \Q $. Makam and Wigderson \cite{Makam_Wigderson_2019} asked whether the ideal generated by these equations is always \emph{radical}, that is, if any  polynomial identity that is valid on  $ \Sing_{n, m} $ lies in the ideal generated by the polynomials $\det(\lambda_1 X_1 + \ldots + \lambda_m X_m)$. We answer this question in the negative by determining the vanishing ideal of $ \Sing_{2, m} $ for all $ m\in \N $. Our results exhibit that there are additional equations arising from the tensor structure of $ X $. More generally, for any $ n $ and $ m\ge n^2 - n + 1 $, we prove there are equations vanishing on $ \Sing_{n, m} $ that are not in the ideal generated by polynomials of type $\det(\lambda_1 X_1 + \ldots + \lambda_m X_m)$. Our methods are based on classical results about Fano schemes, representation theory and Gr\"obner bases.  
\end{abstract}

\maketitle

\section{Introduction}

\subsection{Motivation} In the late 70's, Valiant \cite{Valiant_1979} proposed an algebraic analogue of the infamous P vs NP question, using a computational model distinct from Turing machines, the model of \emph{arithmetic circuits}. The model of arithmetic circuits captures the natural ways to compute a polynomial function from the basic arithmetic operations, addition and multiplication, via a directed graph encoding how the arithmetic operations are to be nested. Considering such circuits exhibits in many cases non-obvious ways to compute a polynomial that can be much more efficient compared to naively plugging in values into an expanded form. However, since circuits overparameterize polynomials, the problem of polynomial identity testing (PIT), i.e. whether two circuits define the same polynomial, arises naturally. A special case of (PIT) is Determinant Identity Testing (DIT), where a polynomial $ p_A $ in variables $ x_1,\ldots, x_m $, $ m\in \N $ is given by a tuple of matrices $ A = (A_1,\ldots, A_m) \in  (\Q^{n\times n})^{m}$ and the formula
\[ 
	p_A = \det(\sum_{i=1}^m x_i A_i). 
\]
While computing the determinant of a fixed rational matrix is easy, there is no known efficient \emph{deterministic} algorithm to even check if an expression such as $ \det(\sum_{i=1}^m x_i A_i) $ evaluates to the constant zero polynomial. This is curious, since an efficient \emph{probabilistic} algorithm is to simply evaluate such a linear matrix expression in a few random points (which is essentially a consequence of the Schwartz-Zippel lemma \cite{Schwartz_1980}, \cite{Zippel_1993}). Finding such an algorithm would have a major impact towards resolving Valiant's analogue of the P vs NP question \cite{Kabanets_Impagliazzo_2003}. For a detailed introduction to Valiant's classes and circuit complexity, we refer to \cite{Buergisser_Peter_2011}.


\subsection{Algebraic View and Contributions}
Determinant identity testing is the problem to decide membership in the algebraic variety $ \Sing_{n,m} $, i.e.~given a matrix tuple $ A = (A_1,\ldots, A_m) $, determine whether all linear combinations of the $ A_i $ are singular. 
The algebraization of the (DIT) problem was driven e.g.~by Makam and Wigderson \cite{Makam_Wigderson_2019}, who argued that understanding the geometric structure of the variety $ \Sing_{n,m} $ might be a stepping stone on a long climb towards resolving VP vs VNP. 
In this work, we will answer the question of radicality Makam and Wigderson posed in \cite[Problem 12.6]{Makam_Wigderson_2019}, \cite[Problem 4.6]{makam2020symbolic}: Are 
\[ 
\det(\sum_{i=1}^m \lambda_i X^{(i)}) \in \Q[X^{(1)},\ldots, X^{(m)}], \qquad (\lambda \in \Q^m)
\] 
all generators of the vanishing ideal of $ \Sing_{n,m} $?\footnote{Here, $ X^{(1)},\ldots, X^{(m)} $ denote matrices of algebraically independent variables.} While these equations set-theoretically describe the variety $ \Sing_{n,m} $ and while, from a (DIT)-oriented perspective, they form the most natural set of elements in the ideal of $ \Sing_{n,m} $, it turns out that the ideal generated by these polynomials is in general not radical. 
In fact, looking at the problem from the perspective of $ 3 $-tensors makes other identities apparent from rank constraints on different flattenings. Our main theorem is the following.
\begin{thm}
The ideal of the variety $\Sing_{2,m}$ is generated by quadratic polynomials 
\[ 
\det(\sum_{i=1}^m \lambda_i X^{(i)}) \in \Q[X^{(1)},\ldots, X^{(m)}], \qquad (\lambda \in \Q^m)
\] 
if and only if $m\leq 2$. For $m\geq 3$ this ideal is generated by quadrics and cubics.

For any $n$, the ideal of the variety $\Sing_{n,m}$ is not generated by the polynomials \[ 
\det(\sum_{i=1}^m \lambda_i X^{(i)}) \in \Q[X^{(1)},\ldots, X^{(m)}], \qquad (\lambda \in \Q^m)
\] 
if $m>n^2-n$.
\end{thm}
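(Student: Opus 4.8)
\emph{Strategy.} Both statements are proved by extracting equations of $\Sing_{n,m}$ that come from rank conditions on \emph{flattenings} of the tensor $X$ and are invisible to the single flattening underlying the determinantal forms. Write $A=B=\C^n$, $C=\C^m$, so that $X\in A\otimes B\otimes C$ and $\C[X]=\Sym^\bullet(A^*\otimes B^*\otimes C^*)$ is a $\GL(A)\times\GL(B)\times\GL(C)$-module. Let $J\subseteq\C[X]$ be the ideal generated by the forms $\det(\sum_i\lambda_iX^{(i)})$. It is stable under this action and, reading $\det(\sum_i\lambda_iX^{(i)})$ as a degree-$n$ form in $\lambda\in C$ with values in $\wedge^nA^*\otimes\wedge^nB^*$, it is generated in degree $n$ by the single irreducible subspace $\wedge^nA^*\otimes\wedge^nB^*\otimes\Sym^nC^*$.

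\emph{First assertion ($n=2$).} A pencil-free reformulation: $X\in\Sing_{2,m}$ iff $\langle X^{(1)},\dots,X^{(m)}\rangle$ projectivises to a linear subspace of the smooth quadric surface $Q=\{\det=0\}\subseteq\PP^3$. Since $Q$ contains no plane and its lines fill out the two rulings — matrices with a prescribed column space, respectively a prescribed row space — this span has projective dimension $\le1$, and $\Sing_{2,m}=\Sigma_A\cup\Sigma_B$, where $\Sigma_A$ (resp.\ $\Sigma_B$) is the variety of tensors whose $A$-flattening $A^*\to B^*\otimes C^*$ (resp.\ $B$-flattening) has rank $\le1$. These are the rank-one loci of generic $2\times2m$ matrices, so $I(\Sigma_A),I(\Sigma_B)$ are the ideals of their $2\times2$ minors and $I(\Sing_{2,m})=I(\Sigma_A)\cap I(\Sigma_B)$. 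By Cauchy's formula $I(\Sigma_A)_2=\wedge^2A^*\otimes\bigl(\wedge^2B^*\otimes\Sym^2C^*\oplus\Sym^2B^*\otimes\wedge^2C^*\bigr)$ and symmetrically for $\Sigma_B$; intersecting these multiplicity-free decompositions leaves exactly $\wedge^2A^*\otimes\wedge^2B^*\otimes\Sym^2C^*$, i.e.\ $I(\Sing_{2,m})_2=J_2$: the only quadrics vanishing on $\Sing_{2,m}$ are the determinantal ones.

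In degree $3$, $I(\Sigma_A)_3=S_{(2,1)}A^*\otimes S_{(2,1)}(B^*\otimes C^*)$; expanding $S_{(2,1)}(B^*\otimes C^*)$ via Kronecker coefficients (using $\dim B=2$) gives $S_{(3)}B^*\otimes S_{(2,1)}C^*\oplus S_{(2,1)}B^*\otimes\bigl(S_{(3)}C^*\oplus S_{(2,1)}C^*\oplus\wedge^3C^*\bigr)$. Intersecting with the analogous decomposition of $I(\Sigma_B)_3$ yields $I(\Sing_{2,m})_3=S_{(2,1)}A^*\otimes S_{(2,1)}B^*\otimes\bigl(S_{(3)}C^*\oplus S_{(2,1)}C^*\oplus\wedge^3C^*\bigr)$. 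But $J_3$ is a quotient of $\C[X]_1\otimes\bigl(\wedge^2A^*\otimes\wedge^2B^*\otimes\Sym^2C^*\bigr)$, whose $\GL(C)$-content lies in $C^*\otimes\Sym^2C^*=\Sym^3C^*\oplus S_{(2,1)}C^*$ and therefore misses $\wedge^3C^*$. As $\wedge^3C^*\ne0$ iff $m\ge3$, this already shows $J\subsetneq I(\Sing_{2,m})$ for $m\ge3$; concretely the extra cubics are the $3\times3$ minors of the $4\times m$ matrix $\bigl[X^{(1)}\mid\cdots\mid X^{(m)}\bigr]$ whose columns are the flattened $X^{(i)}$, whose span is exactly $S_{(2,1)}A^*\otimes S_{(2,1)}B^*\otimes\wedge^3C^*$. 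To finish the dichotomy I would compute the graded minimal free resolution of $I(\Sigma_A)\cap I(\Sigma_B)$ from the Mayer--Vietoris sequence $0\to I(\Sigma_A)\cap I(\Sigma_B)\to I(\Sigma_A)\oplus I(\Sigma_B)\to I(\Sigma_A)+I(\Sigma_B)\to0$, using the Eagon--Northcott resolutions of $I(\Sigma_A),I(\Sigma_B)$ and the resolution of $I(\Sigma_A)+I(\Sigma_B)$, which cuts out (and for $m\ge1$ equals) the Segre ideal of the cone over $\PP^1\times\PP^1\times\PP^{m-1}=\Sigma_A\cap\Sigma_B$; the mapping cone then produces generators only in degrees $2$ and $3$, and a Gröbner-basis/Hilbert-series check shows the degree-$3$ ones are redundant precisely when $m\le2$ (the case $m=1$ being the hypersurface $\{\det=0\}$).

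\emph{Second assertion (any $n$, $m>n^2-n$).} The key input is the classical fact that a linear space of singular $n\times n$ matrices has dimension at most $n^2-n$. Hence for every $X\in\Sing_{n,m}$ the $n^2\times m$ matrix $\mathcal F(X):=\bigl[X^{(1)}\mid\cdots\mid X^{(m)}\bigr]$ has rank $\le n^2-n$, so all $(n^2-n+1)\times(n^2-n+1)$ minors of the generic matrix $\mathcal F$ lie in $I(\Sing_{n,m})$. As a $\GL(A)\times\GL(B)\times\GL(C)$-module their span is $\wedge^{\,n^2-n+1}(A^*\otimes B^*)\otimes\wedge^{\,n^2-n+1}C^*$, which is nonzero exactly because $m=\dim C\ge n^2-n+1$; so $I(\Sing_{n,m})$ contains constituents of $\GL(C)$-type $\wedge^{\,n^2-n+1}C^*$, a Schur functor indexed by a partition with $n^2-n+1$ rows. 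On the other hand $J_{n^2-n+1}$ is a quotient of $\C[X]_{n^2-2n+1}\otimes\bigl(\wedge^nA^*\otimes\wedge^nB^*\otimes\Sym^nC^*\bigr)$; by Cauchy's formula the $\GL(C)$-content of $\C[X]_{n^2-2n+1}$ involves only partitions with at most $n^2-2n+1=(n-1)^2$ rows, and tensoring with $\Sym^nC^*$ (Pieri) adds at most one row, so every $\GL(C)$-constituent of $J_{n^2-n+1}$ has a partition with at most $(n-1)^2+1$ rows. Since $n^2-n+1>(n-1)^2+1$ for all $n\ge2$, the type $\wedge^{\,n^2-n+1}C^*$ cannot occur in $J$, whence $J\subsetneq I(\Sing_{n,m})$. (For $n=2$ this recovers the cubics above, $\mathcal F$ being the $4\times m$ matrix and $n^2-n+1=3$.)

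\emph{Main obstacle.} The non-containments $J\subsetneq I(\Sing_{n,m})$ are the soft part — pure representation theory together with the elementary rank bound, requiring no control of the (possibly many) components of $\Sing_{n,m}$ for $n\ge3$. The real work is the \emph{generation} statement in the first assertion: that no minimal generators beyond degree $3$ occur and that the cubics are redundant exactly for $m\le2$. This amounts to understanding the full resolution of the intersection $I(\Sigma_A)\cap I(\Sigma_B)$ of two determinantal ideals attached to non-proportional flattenings of the same variable tensor — in particular to verifying that $I(\Sigma_A)+I(\Sigma_B)$ is the Segre ideal and that its linear syzygies exhaust the degree-$3$ part of the intersection — which is where a careful, representation-theoretically organised Gröbner-basis computation is indispensable.
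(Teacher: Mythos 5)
Your decomposition $\Sing_{2,m}=\Sigma_A\cup\Sigma_B$ as a union of the two multilinear-rank-$\le1$ loci is correct and is exactly the observation underlying the paper's Lemma~\ref{lem:contained}: the image of $\C^m\to\C^2\otimes\C^2$ lands in $\P^1\times\P^1$, whose linear subspaces lie in one ruling or the other. Your degree-$2$ and degree-$3$ calculations are also right: $S^2(A^*\otimes B^*\otimes C^*)$ and $S^3(A^*\otimes B^*\otimes C^*)$ are multiplicity-free when $\dim A=\dim B=2$, the intersection $I(\Sigma_A)\cap I(\Sigma_B)$ in degree~$2$ is precisely $\wedge^2A^*\otimes\wedge^2B^*\otimes S^2C^*=J_2$, and in degree~$3$ it acquires the summand $S^{(2,1)}A^*\otimes S^{(2,1)}B^*\otimes\wedge^3C^*$ (the $3\times3$ minors of the $C$-flattening), which is absent from $J_3\subset S^{(2,1)}A^*\otimes S^{(2,1)}B^*\otimes(S^3C^*\oplus S^{(2,1)}C^*)$. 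This cleanly proves the ``only if'' direction and that new cubics appear for $m\ge3$. Your argument for the second assertion reaches the same conclusion as the paper's Theorem~\ref{thm:fano_empty} by the same two ingredients --- the Dieudonné/Chan--Ilten bound $\dim L\le n^2-n$ for singular matrix spaces, giving the $(n^2-n+1)\times(n^2-n+1)$ minors, plus a $\GL(C)$-bookkeeping argument showing these cannot lie in $J$. Your bookkeeping (Cauchy bounds the number of rows in $\C[X]_{n^2-2n+1}$ by $(n-1)^2$, Pieri with $\Sym^nC^*$ adds at most one row, and $(n-1)^2+1<n^2-n+1$ for $n\ge2$) is correct and marginally less slick than the paper's, which simply observes via Littlewood--Richardson that any $\GL(C)$-constituent in the image of $M_{m,n,d'}$ must contain $(n)$ as a subdiagram, which $1^{n^2-n+1}$ does not.

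The genuine gap is the positive half of the first assertion: that $I(\Sing_{2,m})$ is in fact generated in degree $\le3$ for all $m\ge3$, and that for $m\le2$ the quadrics alone suffice. You correctly identify this as the crux but only outline a plan --- resolve $I(\Sigma_A)\cap I(\Sigma_B)$ via Mayer--Vietoris, Eagon--Northcott for the two determinantal ideals, a resolution of $I(\Sigma_A)+I(\Sigma_B)$, and a mapping cone. Several steps in that plan are unverified and non-automatic: that $I(\Sigma_A)+I(\Sigma_B)$ is the \emph{reduced} Segre ideal (i.e.\ that the scheme-theoretic intersection $\Sigma_A\cap\Sigma_B$ is reduced, not just its radical), that the mapping cone is minimal or at least introduces no new generators above degree~$3$ after cancellation, and that the degree-$3$ part becomes redundant exactly for $m\le2$. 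The paper settles all of this by a different, more hands-on route in Theorem~\ref{thm:2by2case-groebner-basis}: it writes down an explicit generating set (the polarizations $\det(X_i+X_j)-\det X_i-\det X_j$, the $3\times3$ minors of the block matrices, and certain degree-$4$ products of $2\times2$ minors of the $C$-flattening), proves it is a Gr\"obner basis by reducing Buchberger's criterion to $m\le7$ via the disjoint-variables argument and a finite computer check, and then concludes radicality from the square-free leading terms. The work you deferred is exactly where the paper's proof lives; without carrying out either your homological plan or some equivalent, the proposal does not yet establish the degree bound or the $m\le2$ dichotomy.
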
 

\subsection{Related work}
By a celebrated result of Kabanets and Impagliazzo \cite{Kabanets_Impagliazzo_2003}, a deterministic polynomial time algorithm for (DIT) would show that either NEXP $\not\subset $ P/poly or the permanent is not computable by polynomially-sized arithmetic circuits. Since the permanent is complete for VNP \cite{Buergisser_Peter_2011}, this would give evidence to a separation of the Valiant classes VP and VNP. The Kabanets-Impagliazzo result is conceptually remarkable in the sense that \emph{constructing} an algorithm for (DIT) would indicate \emph{hardness} of some family in VNP. It has thus motivated extensive studies on the algebraic properties of (DIT).

Perhaps surprisingly, the case of non-commuting variables turned out easier. A line of work (e.g. \cite{Garg_Gurvits_Oliveira_Wigderson_2019}, \cite{Ivanyos_Qiao_Subrahmanyam_2018}) showed the existence of a deterministic polynomial time algorithm for the non-commutative analogue of (DIT). Besides the derandomization of (DIT), other approaches for the resolution of VP vs VNP have been studied, recently e.g.~via sparsity of families of Sum-of-Squares representations \cite{Dutta_Saxena_Thierauf_2021}. Our methods are based on the fundamental article by Chan and Ilten \cite{CI}.

\section*{Acknowledgements}
We thank Visu Makam and Avi Wigderson for inspiring questions and encouraging comments.
\section{Preliminaries and general notation} 

In what follows, we identify $n\times n$ matrices with elements of the tensor product $\CC^n\otimes\CC^n$. Further, we identify m-tuples $ A = (A_1,\ldots, A_m) $ of $ n\times n $-matrices with $ 3 $-tensors, that is, elements $\sum_{i=1}^m e_i\otimes A_i\in \C^m\otimes\C^n\otimes\C^n $. Let 
\[ 
	\Sing_{n, m} := \{(A_1,\ldots, A_m)\in (\C^n\otimes\C^n)^m \mid \forall \lambda\in \Q^m: \sum_{k=1}^m  \lambda_k A_k \text{ is singular }\} \subseteq \C^m\otimes\C^n\otimes\C^n
\]
be the set of all singular matrix tuples, i.e. all elements such that the image of the corresponding map $\C^m\to \C^n\otimes \C^n$ only contains matrices of rank at most $n-1$. We write $ I_{n, m}$ or $ I(\Sing_{n, m}) $ for the vanishing ideal of $ \Sing_{n, m} $ in the polynomial ring in matrices $ X^{(1)},\ldots, X^{(m)} $ of algebraically independent indeterminates $ (x_{ij}^{(k)})_{i, j\in \{1,\ldots, n\}} $, where $ k\in \{1,\ldots, m\} $. 

The variety $ \Sing_{n, m} $ comes with a natural group action of the group $ \GL_{m} \times \GL_{n} \times \GL_{n} $, which we will call $ G := G_{n, m} $ throughout the paper. $ G $ acts on $ 3$-tensors by ``left-right-back'' multiplication, i.e. in the language of matrix tuples, if $ (U, V, W)\in G $ and $ A = (A_1,\ldots, A_m)\in \Sing_{n, m} $, then
\[ 
	(U, V, W)A = (VA_1W,\ldots, VA_mW)U
\]
Clearly, the determinant of a fixed matrix can only change by a nonzero scalar under left-right multiplication with an element of $\GL_{n} \times \GL_{n} $. On the other hand, linear transformations on the $ \C^m $-mode just correspond to taking different linear combinations, whence $ \Sing_{n, m} $ is invariant under the action of $ G $. We will assume that the reader is familiar with basic representation theory, in particular heighest weight vectors for the general linear group. For a brief introduction we refer to \cite[Chapter 10]{michalek2021invitation} and for a detailed one to \cite{fulton2013representation}.

\subsection{Ideal generation and multiplication maps}

When studying ideal generation via the means of invariant theory, it is crucial to understand what happens if we take the tensor product of irreducible representations: It turns out that the polynomials of degree $ d'+d\in \N_0 $ generated by the degree-$ d $ part of some ideal $ I $ are the image of a sum of tensor products of explicit modules. For $ m\in \N_0 $, tensor products of irreducible $ \GL_m $-modules are understood by the Littlewood-Richardson rule. 

Let $ I $ a homogeneous ideal of a standard graded ring $ R $ (e.g.~a polynomial ring graded by the total degree). For $ d, d'\in \N_0 $, we denote by $ R_{d} $ (resp: $ I_{d} $) the degree-$ d $ component of $ R $ (resp: $ I $). Furthermore, assume that a group $ H $ acts on $ R_d$ and $ I_d$ for any $d$, thus turning both $ I $ and $ R $ into $ H $-modules. Then the multiplication map 
\[ 
I_d \times R_{d'} \to I_{d+d'}, (p, r) \mapsto r\cdot p
\]
induces the following map of $H$-representations
\[ 
M_{I,d,d'}: I_d\otimes R_{d'}\rightarrow I_{d+d'}.
\]
The image of $M_{I,d,d'}$ are those elements in the ideal that are of degree $d+d'$ and lie in the ideal generated by all elements of $ I $ of degree at most $d$ (equivalently: of degree precisely $ d $). In the case  $R =\C[X^{(1)},\dots,X^{(m)}]$ and $ I = I_{n, m} $ we will write $ M_{m, d, d'} $ instead of $ M_{I,d,d'} $ and suppress the dependency on $ n $. 

Now, there are finite collections $ \mathcal{V} $, $ \mathcal{W} $ of irreducible modules such that $ I_d $ and $ R_{d'} $ split into sums
\[ 
I_d = \bigoplus_{V\in \mathcal{V}} V
\]
and
\[ 
R_{d'} = \bigoplus_{W\in \mathcal{W}} W.
\]
The image of $ M_{I,d,d'} $ is then spanned by the images of the modules $ V\otimes W $, where $ (V, W)\in \mathcal{V}\times \mathcal{W} $. In particular, assume $ I = I_{n, m} $ with $ \GL_{m} $ acting on it. Then by the representation theory of the general linear group, the image of $ M_{I,d,d'} $ is spanned by the  images of modules isomorphic to 
\[ 
	S^{\lambda} (\C^m) \otimes S^{\mu} (\C^m)	
\]
where $ \lambda \vdash d $ and $ \mu \vdash d' $. In this case, the Littlewood-Richardson rule describes how such a product of representation decomposes as a sum (cf. \cite[Appendix A.1]{fulton2013representation}).

\section{The case of $2\times 2$ matrices}\label{sec:2by2}

In this section we restrict our attention to the case $ n = 2 $. We determine generators, and in fact the whole Gr\"obner basis of the vanishing ideal $I(\Sing_{2, m})$. Our main idea is based on the proof of \cite[Theorem 5.8]{michalek2021invitation}.

For $k\in \{1,\ldots, m\}$, let $X_k=(x_{ij}^{(k)})$ be matrices with distinct indeterminate entries.
Let $I=I(\Sing_{2, m})\subset\C[x_{ij}^{(k)}]$ be the homogeneous vanishing ideal of $X$. By \cite[Proposition 5.1]{Makam_Wigderson_2019}, the degree one component $I_1$ is 0, and the degree two component is spanned by the forms $\sum_{i=1}^m \lambda_i \det(X_i)$ for every $\lambda_1,\dots,\lambda_m\in\C$.

Let $J$ be the ideal generated by $I_2$. Then the following holds.

\begin{thm}\label{thm:2by2case-groebner-basis}
If $m\geq 3$ then the ideal $J$ is not radical. The ideal $I=\sqrt{J}$ is generated by $I_2$ and all $3\times 3$ minors of the linear map $\C^m\to \C^2\otimes\C^2$.
\end{thm}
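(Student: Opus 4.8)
The plan is to reduce the computation of $I = I(\Sing_{2,m})$ to an intersection of two classical determinantal ideals, then use representation theory of $G = \GL_m\times\GL_2\times\GL_2$ to identify the degree-$2$ and degree-$3$ generators and prove non-radicality, and finally certify via a Gröbner basis that no further generators are needed. For \textbf{Step 1} (geometric reduction), identify $(X_1,\dots,X_m)$ with $\sum_i e_i\otimes X_i\in\C^m\otimes\C^2\otimes\C^2$ and recall that the singular $2\times 2$ matrices form the affine cone over a smooth quadric surface $Q\cong\P^1\times\P^1\subset\P^3$. A tuple lies in $\Sing_{2,m}$ exactly when the image of the flattening $X\colon\C^m\to\C^2\otimes\C^2$ is a linear subspace contained in this cone; since a smooth quadric in $\P^3$ contains only points and lines, and every line lies in one of its two rulings, such an image is $\{0\}$, or contained in some $v\otimes\C^2$, or contained in some $\C^2\otimes w$. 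Translating back, $\Sing_{2,m} = \cV_a\cup\cV_b$, where $\cV_a$ (resp.\ $\cV_b$) is the variety of tensors for which the flattening $\C^2\to\C^m\otimes\C^2$ contracting the first (resp.\ second) $\C^2$ has rank at most $1$. These are determinantal varieties of $2\times 2m$ matrices of distinct indeterminates, so $I(\cV_a)$ and $I(\cV_b)$ are the prime ideals generated by the corresponding $2\times 2$ minors, and $I = I(\cV_a)\cap I(\cV_b)$. In particular $\Sing_{2,m}\subseteq\{\,\rk(\C^m\to\C^2\otimes\C^2)\le 2\,\}$, so all $3\times 3$ minors of that flattening lie in $I$.

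For \textbf{Step 2} (the cubics and failure of radicality), decompose everything as a $G$-module. One has $I_2\cong S^{(2)}(\C^m)\otimes\Wedge{2}\C^2\otimes\Wedge{2}\C^2$, and using $\Wedge{2}(\C^m\otimes\C^2)=S^{(2)}\C^m\otimes\Wedge{2}\C^2\oplus\Wedge{2}\C^m\otimes S^{(2)}\C^2$ one checks that $I(\cV_a)_2$ and $I(\cV_b)_2$ each carry this module plus one extra summand involving $\Wedge{2}\C^m$, with common part exactly $I_2$. Now $J_3=\im M_{m,2,1}$ is a quotient of $I_2\otimes R_1$, and since $S^{(2)}\C^m\otimes\C^m = S^{(3)}\C^m\oplus S^{(2,1)}\C^m$ while $\Wedge{2}\C^2\otimes\C^2 = S^{(2,1)}\C^2$, the only $\GL_m$-isotypic components occurring in $J_3$ are $S^{(3)}\C^m$ and $S^{(2,1)}\C^m$. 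On the other hand, by the plethysm $\Wedge{3}(U\otimes W) = \bigoplus_{\lambda\vdash 3}S^\lambda U\otimes S^{\lambda'}W$ (the terms with a column of length $3$ vanishing for $\dim = 2$), the span of the $3\times 3$ minors of the flattening $\C^m\to\C^2\otimes\C^2$ is an irreducible $G$-module with $\GL_m$-type $\Wedge{3}\C^m = S^{(1,1,1)}\C^m$. As this type does not occur in $J_3$, the minor module meets $J_3$ trivially; since it is nonzero precisely for $m\ge 3$, we get $J\subsetneq I$, and because $\sqrt J = I(V(J)) = I(\Sing_{2,m}) = I$ in any case, $J$ fails to be radical for $m\ge 3$. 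This computation also identifies the cubic generators that must be added: exactly the $3\times 3$ minors.

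For \textbf{Step 3} (these generators suffice), let $I'\subseteq I$ be the ideal generated by $I_2$ together with the $3\times 3$ minors of $\C^m\to\C^2\otimes\C^2$; we must show $I' = I$. The plan is to exhibit an explicit finite generating set $\cG$ of $I'$ — the forms $\det X_i$, the mixed discriminants $\det(X_i+X_j)-\det X_i-\det X_j$, and the $3\times 3$ minors — together with a monomial order that is symmetric under permuting the $m$ matrices, and to prove that $\cG$ (possibly augmented by a bounded number of further elements of $I'$) is a Gröbner basis. Because the order is permutation-symmetric, the S-polynomials fall into finitely many combinatorial types according to how many matrix indices the two generators share, so Buchberger's criterion reduces to a finite list of reductions, uniformly in $m$. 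One then observes that the resulting initial ideal is squarefree; hence $I'$ is radical, and since $V(I') = \Sing_{2,m}$ while $I'\subseteq I$, we conclude $I' = \sqrt{I'} = I$. The genuinely hard part is precisely this step: guessing the correct monomial order and the precise Gröbner basis, organizing the S-polynomial reductions into finitely many $m$-independent cases, and verifying the initial ideal is squarefree with the expected Hilbert function — making the argument uniform in $m$, rather than running it for each $m$ separately, is the delicate point. (Alternatively, one may try to derive a Gröbner basis of $I(\cV_a)\cap I(\cV_b)$ from the known diagonal Gröbner bases of the two determinantal ideals $I(\cV_a)$ and $I(\cV_b)$ and read off its generators directly.)
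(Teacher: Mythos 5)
Your Steps 1 and 2 are correct and essentially coincide with the paper's reasoning: the identification $\Sing_{2,m}=\cV_a\cup\cV_b$ via the lines on the quadric $\P^1\times\P^1$ is exactly how the paper argues that the $3\times3$ minors of the flattening $\C^m\to\C^2\otimes\C^2$ (and certain additional quartics) lie in $I$, and your observation that the minor module has $\GL_m$-type $\Wedge{3}\C^m$ while $\im M_{m,2,1}$ only contains the types $S^{(3)}\C^m$ and $S^{(2,1)}\C^m$ is precisely the argument the paper runs (in Theorem \ref{thm:fano_empty}, specialized to $n=2$) to conclude $J\subsetneq I=\sqrt{J}$ for $m\ge 3$. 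So the non-radicality claim is fully established by your argument.

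The gap is in Step 3, which you correctly flag as the crux but do not execute, and two concrete things are missing. First, the set you name (the $\det X_i$, the polarizations $\det(X_i+X_j)-\det X_i-\det X_j$, and the $3\times3$ minors) is \emph{not} by itself a Gr\"obner basis: the paper must adjoin degree-$4$ elements, namely all products of a $2\times2$ minor taken from columns $\{1,2\}$ of the $m\times4$ flattening matrix with a $2\times2$ minor taken from columns $\{2,4\}$ (and likewise for the pairs $\{1,3\}$ and $\{3,4\}$). These lie in $I$ exactly because of your union decomposition (on each of $\cV_a$, $\cV_b$ one of the two factors vanishes), and without them the initial ideal is wrong, so the squarefree-leading-term argument cannot be run; one must then also verify separately that these quartics are generated by the quadrics and cubics, since otherwise the theorem's generation statement would not follow from radicality of the constructed ideal. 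Second, the uniform-in-$m$ reduction needs to be made precise: each basis element involves the variables of at most four of the matrices, so an S-pair whose two elements together involve eight distinct matrices has coprime leading terms and reduces by Buchberger's second criterion, and all remaining S-pairs involve at most seven matrices, reducing the verification to $m\le 7$ (done by computer in the paper). Your proposal gestures at both points ("possibly augmented by a bounded number of further elements", "finitely many combinatorial types") but supplies neither the missing quartics nor the actual reductions, so as written the proof of the second sentence of the theorem is incomplete.
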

\begin{proof}
We show this by constructing a Gr\"obner basis of an ideal $\tilde I$ that contains $J$ and is contained in $I$. As the leading monomials are all square-free, the ideal $\tilde I$ must be radical. Thus $\tilde I=I$.
As a monomial ordering we consider the degrevlex-ordering, ordering the variables first by matrix, then by column and lastly by row, i.e. $x_{ij}^{(k)}>x_{i'j'}^{(k')}$ if ($k<k'$ ) or ($k=k'$ and $j<j'$) or ($k=k'$ and $j=j'$ and $i<i'$).

In degree two we consider the forms
\begin{equation}
\label{eq}
\det(X_i+X_j)-\det X_i-\det X_j\quad \forall i,j.
\end{equation}
These generate $J_2$: Consider $\lambda_1,\dots,\lambda_m$ as indeterminates and consider the quadratic form $q:=\det(\sum_{i=1}^m \lambda_i X_i)$ in the variables $\lambda_1,\dots,\lambda_m$.
The $\binom{m+1}{2}$ coefficients of $q$ span a subspace of $\C[x_{ij}^{(k)}]$ of dimension at most $\binom{m+1}{2}$. However, all $\binom{m+1}{2}$ forms above are contained in this subspace, and are linearly independent (for $i\neq j$ the form $\det(X_i+X_j)$ is the only one containing any products of the variables of the $i$-th and $j$-th matrix). Hence, these forms form a basis of $J_2$.

In degree three, for every $i,j,k$ we consider all $3\times 3$ minors of the matrix
\[
\begin{pmatrix}
X_i & X_j\\
X_k & 0
\end{pmatrix}.
\]
Let 
\[
T=\begin{pmatrix}
x_{11}^{(1)} & x_{12}^{(1)} & x_{21}^{(1)} & x_{22}^{(1)}\\
x_{11}^{(2)} & x_{12}^{(2)} & x_{21}^{(2)} & x_{22}^{(2)}\\
\vdots & \vdots & \vdots & \vdots\\
x_{11}^{(m)} & x_{12}^{(m)} & x_{21}^{(m)} & x_{22}^{(m)}
\end{pmatrix}
\]
and consider the following $m\times 2$ submatrices
$T_{12},T_{13},T_{24},T_{34}$ where $T_{ij}$ contains the $i$-th and the $j$-th column of $T$.

Let $G_{12},G_{13},G_{24},G_{34}$ be the sets containing the $2\times 2$ minors of the corresponding matrices. Consider the products $G_{12}G_{24},G_{13}G_{34}$.

A Gr\"obner basis $G$ for $\tilde I$ will be given by 
\begin{enumerate}
\item all equations in \cref{eq},
\item all $3\times 3$ minors as above, after removing all forms whose leading term is not square-free,
\item and all elements of the sets $G_{12}G_{24},G_{13}G_{34}$, after removing all forms whose leading term is not square-free.
\end{enumerate}
That all these forms are contained in $I$ is proved in Lemma \ref{lem:contained}.

We prove the theorem relying on a computer algebra system as follows. To prove that $G$ is a Gr\"obner basis, by Buchberger's criterion, we need to show that all $S$-pairs reduce to $0$.
Let $f,g\in G$. By construction $f$ and $g$ each contain the variables of at most four matrices. If $f$ and $g$ both contain variables of four different matrices such that combined there appear variables of eight matrices, then by construction, the variables of $f$ and of $g$ are disjoint. It follows that the $S$-pair of $f$ and $g$ reduces to 0, by Buchberger's second criterion.
If this is not the case, then $f$ and $g$ contain the variables of at most seven different matrices. It therefore suffices to show the statement for $m=2,\dots,7$. We verify this using a computer. Hence, $G$ is a Gr\"obner basis.

Every form of degree two or three in this Gr\"obner basis contains only monomials that are square-free. For forms in $G_{12}G_{24}$ and $G_{13}G_{34}$ this is not true. However, the leading monomial is still square-free. In particular, the leading ideal of $\tilde I$ is radical, and so is $\tilde I$. Thus $\tilde I=I$ and $G$ is in fact a Gr\"obner basis of $I$. We note that the degree four elements of the Gr\"obner basis are generated by quadrics and cubics (which may be checked e.g. using a computer).
\end{proof}

\begin{lemma}
\label{lem:contained}
All forms in $G$ are contained in $I$.
\end{lemma}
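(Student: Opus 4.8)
The plan is to deduce the lemma from the decomposition of $\Sing_{2,m}$ into its two irreducible components. Under the identification of $2\times 2$ matrices with $\C^2\otimes\C^2$, the singular matrices form the affine cone over a smooth quadric surface in $\P(\C^2\otimes\C^2)=\P^3$. A linear subspace all of whose elements are singular projectivises to a linear subspace contained in this quadric; a smooth quadric surface contains no plane and only the lines of its two rulings, so such a subspace lies either in $v\otimes\C^2$ for some $v\in\C^2$ or in $\C^2\otimes w$ for some $w\in\C^2$. Applying this to the linear span of the matrices of a point of $\Sing_{2,m}$ gives
\[
\Sing_{2,m}=\Sigma_{\mathrm{col}}\cup\Sigma_{\mathrm{row}},
\]
where $\Sigma_{\mathrm{col}}=\{(A_\ell)_\ell : A_\ell\in v\otimes\C^2\ \forall\ell,\ \text{some }v\in\C^2\}$ is the locus of tuples sharing a column space and $\Sigma_{\mathrm{row}}=\{(A_\ell)_\ell : A_\ell\in\C^2\otimes w\ \forall\ell,\ \text{some }w\in\C^2\}$ the locus sharing a row space. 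I would record this first (it is classical, and is also a special case of the Fano-scheme description underlying our approach) and then check each generator of $G$ on $\Sigma_{\mathrm{col}}$ and on $\Sigma_{\mathrm{row}}$ separately.

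The quadrics in \cref{eq} need no work: $\det(X_i+X_j)$, $\det X_i$ and $\det X_j$ are each of the form $\det(\sum_\ell\lambda_\ell X^{(\ell)})$, for $\lambda=e_i+e_j$, $e_i$, $e_j$ respectively, hence vanish on $\Sing_{2,m}$ by definition, and so does their combination. For the $3\times 3$ minors of $\left(\begin{smallmatrix}X_i&X_j\\X_k&0\end{smallmatrix}\right)$, I would show that this $4\times 4$ matrix has rank at most $2$ on each component. On $\Sigma_{\mathrm{col}}$ every column of $X_i$, $X_j$ and $X_k$ lies in a common line $\langle v\rangle\subseteq\C^2$, so every column of $\left(\begin{smallmatrix}X_i&X_j\\X_k&0\end{smallmatrix}\right)$ lies in the $2$-dimensional subspace $\{(av,bv):a,b\in\C\}\subseteq\C^4$; on $\Sigma_{\mathrm{row}}$ every row of $X_i$, $X_j$, $X_k$ lies in a common line, so every row of the matrix lies in a $2$-dimensional subspace. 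Either way all $3\times 3$ minors vanish.

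For the products $G_{12}G_{24}$ and $G_{13}G_{34}$, substitute $x_{ij}^{(\ell)}=v_i b^{(\ell)}_j$ on $\Sigma_{\mathrm{col}}$, respectively $x_{ij}^{(\ell)}=a^{(\ell)}_i w_j$ on $\Sigma_{\mathrm{row}}$, into the matrix $T$. On $\Sigma_{\mathrm{col}}$ columns $1$ and $3$ of $T$ are both scalar multiples of $(b^{(\ell)}_1)_\ell$ and columns $2$ and $4$ are both scalar multiples of $(b^{(\ell)}_2)_\ell$, so $T_{13}$ and $T_{24}$ have rank at most $1$ and all minors in $G_{13}$ and in $G_{24}$ vanish on $\Sigma_{\mathrm{col}}$; symmetrically, on $\Sigma_{\mathrm{row}}$ columns $1,2$ and columns $3,4$ of $T$ are proportional, so all minors in $G_{12}$ and in $G_{34}$ vanish on $\Sigma_{\mathrm{row}}$. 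Hence $G_{12}G_{24}$ vanishes on $\Sigma_{\mathrm{col}}$ through its $G_{24}$-factor and on $\Sigma_{\mathrm{row}}$ through its $G_{12}$-factor, while $G_{13}G_{34}$ vanishes on $\Sigma_{\mathrm{col}}$ through $G_{13}$ and on $\Sigma_{\mathrm{row}}$ through $G_{34}$; in both cases the product vanishes on all of $\Sing_{2,m}$, so it lies in $I$.

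The only conceptual ingredient is the two-component decomposition; after that the argument is elementary linear algebra. The one place I would be careful is the bookkeeping in the last step — matching, for each of the two products, the factor that vanishes on each component — together with (if one prefers not to quote the decomposition outright) the remark that for fixed $i,j,k$ the restricted tuple $(X_i,X_j,X_k)$ still spans a linear space of one of the two types, which is exactly what forces the $4\times 4$ rank bound.
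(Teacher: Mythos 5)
Your proposal is correct and follows essentially the same route as the paper: both arguments rest on the classification of linear subspaces of the smooth quadric $\P^1\times\P^1\subset\P^3$, which gives the row/column dichotomy that kills one factor of each of the products $G_{12}G_{24}$ and $G_{13}G_{34}$. The only difference is cosmetic: you bound the rank of the block matrix $\left(\begin{smallmatrix}X_i&X_j\\X_k&0\end{smallmatrix}\right)$ directly on each component, whereas the paper first identifies its $3\times3$ minors, modulo $J$, with those of the flattening $T$ and then bounds the rank of that flattening.
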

\begin{proof}
This is clear for the forms in \cref{eq}.
Modulo $J$, the $3\times 3$ minors we have in $G$ are the same as the $3\times 3$ minors of the map $\C^m\to \C^2\otimes \C^2$. This can be explicitly written down as there are only three different matrices involved.

It thus suffices to show that the map $\C^m\to \C^2\otimes\C^2$ has rank at most 2. We consider the corresponding map between projective spaces
\[
\P\C^m\to \P(\C^2\otimes\C^2).
\]
By assumption the image is contained in the set of rank one matrices which is $\P^1\times \P^1$. This variety only contains three types of subspaces: For every $v,w\in\P^1$ the spaces $v\times\P^1$, $\P^1\times w$ and $v\times w$ are contained. There are no subspaces of (projective) dimension two contained in $\P^1\times\P^1$. Hence the image of $\C^m\to \C^2\otimes\C^2$ has dimension at most two.

Lastly, we need to check the elements of $G_{12}G_{24},G_{13}G_{34}$. Let $A\in X$ be an $m$-tuple of matrices, then the image of the map above has only two possibilities. Either the columns of all matrices in $A$ span a one-dimensional space, or the rows span a one-dimensional space (or both). Hence, either every form in $G_{12}$ vanishes or every form in $G_{24}$ vanishes, and the same holds for $G_{13}$ and $G_{34}$.
\end{proof}
\section{There are (essentially) no new equations for $ m>n^2 $}\label{sec:large-m}

For $ m>n^2 $, expressions of the kind $ \sum_{i=1}^m \lambda_i X_i $, ($ \lambda \in \Q^m$) become overparameterized due to the dimension of the matrix space being $ n^2 $. Since membership in $ \Sing_{n, m} $ does set-theoretically only depend on the slices of a given $ 3 $-tensor, one would both hope and expect that the ``complexity'' needed to describe the vanishing ideal $ I(\Sing_{n, m}) $ does not increase beyond $ m=n^2 $. However, we need to be careful with the precise formulation of such a statement. For $ m > n^2 $, clearly $ \Sing_{n, m} $ and $ \Sing_{n, n^2} $ are subvarieties of spaces of different dimensions, with different groups acting on them. 



%
In this short section, we will show that if $ m $ grows beyond $ n^2 $, the ``new'' equations in the vanishing ideal $ I_{n, m} = I(\Sing_{n, m}) $ of $ \Sing_{n, m} $ that make use of the new variables in $ X^{(n^2 + 1)},\ldots, X^{m} $ can be obtained from ``old'' equations in $  I(\Sing_{n, n^2}) $ via the action of $ \GL_{m} $.  To this end, we will make use of the well-known decomposition of the (graded components of the) polynomial ring on $ \C^m\otimes \C^{n^2} $ endowed with the left-right action of $ \GL_m \times \GL_{n^2} $. 


\begin{lemma}\label{lem:left-right-reps}
	Let $ m, q\in \N $. Consider $ \C^m \otimes \C^q$ with the natural left-right action of $ \GL_m \times \GL_q $, turning the ring of polynomial functions 
	\[
		\C[\C^m\otimes \C^q]  \cong \bigoplus_{d\in \N_0} S^d(\C^m\otimes \C^q)
	\]
	into a $ \GL_m \times \GL_q $-module. For $ d\in \N $, we have the following decomposition into irreducible modules
	\[ 
		S^{d}(\C^m\otimes \C^q) \cong \sum_{\lambda \vdash d} S^{\lambda} \C^m \otimes S^{\lambda} \C^q
	\]
	where $ \lambda $ ranges over all nonnegative integer partitions of $ d $. 
\end{lemma}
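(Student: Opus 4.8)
The plan is to recognize the claimed formula as the categorified Cauchy decomposition and to derive it from Schur--Weyl duality. First I would identify $S^{d}(\CC^m\otimes\CC^q)$ with the space of $\SG_d$-invariants in the tensor power $(\CC^m\otimes\CC^q)^{\otimes d}$, where $\SG_d$ permutes the $d$ tensor factors; this identification is $\GL_m\times\GL_q$-equivariant precisely because the $\SG_d$-action commutes with the left-right action of $\GL_m\times\GL_q$. Next, using the canonical isomorphism $(\CC^m\otimes\CC^q)^{\otimes d}\cong(\CC^m)^{\otimes d}\otimes(\CC^q)^{\otimes d}$ together with the Schur--Weyl decompositions
\[
(\CC^m)^{\otimes d}\cong\bigoplus_{\lambda\vdash d}S^{\lambda}\CC^m\otimes[\lambda],\qquad (\CC^q)^{\otimes d}\cong\bigoplus_{\mu\vdash d}S^{\mu}\CC^q\otimes[\mu],
\]
where $[\lambda]$ denotes the Specht module associated to $\lambda$ and $S^{\lambda}\CC^m=0$ whenever $\lambda$ has more than $m$ parts, I would expand the tensor power as
\[
\bigoplus_{\lambda,\mu\vdash d}\bigl(S^{\lambda}\CC^m\otimes S^{\mu}\CC^q\bigr)\otimes\bigl([\lambda]\otimes[\mu]\bigr),
\]
with $\SG_d$ acting diagonally on the factor $[\lambda]\otimes[\mu]$ and trivially on the Schur functors.

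Then I would take $\SG_d$-invariants termwise. Since Specht modules are self-dual over $\CC$, one has $\bigl([\lambda]\otimes[\mu]\bigr)^{\SG_d}\cong\Hom_{\SG_d}([\lambda],[\mu])$, and by Schur's lemma this is one-dimensional if $\lambda=\mu$ and zero otherwise. This collapses the double sum to $\bigoplus_{\lambda\vdash d}S^{\lambda}\CC^m\otimes S^{\lambda}\CC^q$, and the resulting isomorphism is one of $\GL_m\times\GL_q$-modules because the Schur functors carry the full group action while the invariants functor only touches the commuting $\SG_d$-action. As an alternative, one can argue entirely with characters: the character of $S^{d}(\CC^m\otimes\CC^q)$ in eigenvalues $x_1,\dots,x_m$ of $\GL_m$ and $y_1,\dots,y_q$ of $\GL_q$ equals $h_d\bigl((x_iy_j)_{i,j}\bigr)$, and the classical Cauchy identity $\sum_{d\ge0}h_d(x_iy_j)=\prod_{i,j}(1-x_iy_j)^{-1}=\sum_{\lambda}s_\lambda(x)\,s_\lambda(y)$ gives the claim after extracting the degree-$d$ part and invoking that characters determine finite-dimensional representations of $\GL_m\times\GL_q$; see \cite{fulton2013representation}.

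There is no genuine obstacle here, as the statement is classical. The only points that require care are bookkeeping: verifying that the $\SG_d$- and $\GL_m\times\GL_q$-actions truly commute so that passing to $\SG_d$-invariants preserves the $\GL_m\times\GL_q$-module structure, applying self-duality of the Specht modules correctly, and keeping in mind the convention $S^{\lambda}\CC^m=0$ for $\ell(\lambda)>m$ (and likewise for $\CC^q$), which is what makes the displayed sum finite.
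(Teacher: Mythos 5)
Your proposal is correct. The paper's own proof is a one-line appeal to exactly what you describe as your ``alternative'' argument: it observes that on the level of characters the statement is the classical Cauchy identity and cites \cite[Appendix A.13]{fulton2013representation}, relying on the fact that characters determine finite-dimensional rational representations of $\GL_m\times\GL_q$. Your primary route --- identifying $S^d(\CC^m\otimes\CC^q)$ with the $\SG_d$-invariants of $(\CC^m\otimes\CC^q)^{\otimes d}$, applying Schur--Weyl duality to each tensor factor, and collapsing the double sum via self-duality of Specht modules and Schur's lemma --- is a genuinely different, self-contained derivation. It buys you an explicit, functorial isomorphism rather than a mere equality of characters, at the cost of a longer argument and the (correctly flagged) bookkeeping about commuting actions; the paper's character argument is shorter but only yields the isomorphism type. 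Both are standard and both are complete here, including your remark that $S^{\lambda}\CC^q=0$ when $\ell(\lambda)>q$, which the paper records separately in Remark \ref{rem:m-greater-n-squared}.
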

\begin{proof}
By looking at the characters this is the celebrated Cauchy identity \cite[Appendix A.13]{fulton2013representation}.
\end{proof}

\begin{remark}\label{rem:m-greater-n-squared}
	By definition of the Schur functor, $ S^{\lambda}(\C^q) = 0 $ whenever the partition $ \lambda $ has more than $ q $ parts (i.e.~whenever $ \lambda $ corresponds to a Young tableau with strictly more than $ q $ rows). 
\end{remark}


\begin{notation} 
	For $ m, n, d\in \N$, let $ q:=n^2 $ and fix an identification of the $ \GL_m \times \GL_q $-modules $ S^d(\C^m\otimes \C^q) $ and $ \C[\C^m\otimes \C^q]_{d} $ along with embeddings of the irreducible components of $  S^d(\C^m\otimes \C^q) $ into $ \C[\C^m\otimes \C^q]_{d} $. 
	
	For $ m>n^2 $, we consider the inclusion $ I(\Sing_{n, n^2}) \hookrightarrow  I(\Sing_{n, m}) $. We choose $ n\times n $ matrices of distinct variables $ X^{(1)},\ldots, X^{(m)} $ such that $ \C[\C^m\otimes \C^n \otimes \C^n] \cong \C[X^{(1)},\ldots, X^{(m)}] $ and fix a monomial ordering on the variables such that all entries of $ X^{(1)} $ are greater than all entries of $ X^{(2)} $, which in turn are greater than those of $ X^{(3)} $ and so forth.
\end{notation}

\begin{thm} \label{thm:m-greater-n-squared} For $ m >n^2 $, every irreducible subrepresentation of $ I(\Sing_{n, m}) $ has the highest weight vector in $ I(\Sing_{n, n^2}) $.
\end{thm}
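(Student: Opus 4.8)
The plan is to combine two facts: a representation-theoretic one, that for $m>n^2$ no irreducible $\GL_m$-constituent of the polynomial ring has a highest weight with more than $n^2$ nonzero parts, and an elementary geometric one, that $\Sing_{n,n^2}\times\{0\}^{m-n^2}\subseteq\Sing_{n,m}$. Together these force every highest weight vector of $I(\Sing_{n,m})$ to be a polynomial in $X^{(1)},\dots,X^{(n^2)}$ that vanishes on $\Sing_{n,n^2}$.

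Concretely, put $q:=n^2$. First I would note that $I:=I(\Sing_{n,m})$ is a $\GL_m$-submodule of $R:=\C[X^{(1)},\dots,X^{(m)}]\cong\C[\C^m\otimes\C^q]$, since the $\GL_m$-action on $\Sing_{n,m}$ only re-combines the matrix slices. By the Cauchy identity (Lemma~\ref{lem:left-right-reps}), $R_d\cong\bigoplus_{\lambda\vdash d}S^\lambda\C^m\otimes S^\lambda\C^q$, and by Remark~\ref{rem:m-greater-n-squared} the summand $S^\lambda\C^m\otimes S^\lambda\C^q$ vanishes whenever $\lambda$ has more than $q=n^2$ parts. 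Hence every irreducible $\GL_m$-subrepresentation of $R$, in particular of $I$, is isomorphic to some $S^\lambda\C^m$ with $\ell(\lambda)\le n^2$.

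Next, let $W\subseteq I$ be an irreducible $\GL_m$-subrepresentation with highest weight $\lambda$ (so $\lambda_i=0$ for $i>n^2$), and let $v\in W$ be a highest weight vector. I claim $v\in\C[X^{(1)},\dots,X^{(n^2)}]$. To see this, test $v$ against the one-parameter subgroup $g(t):=\diag(1,\dots,1,t,\dots,t)\in\GL_m$, with $t$ in the last $m-n^2$ coordinates. Since $v$ is a torus weight vector of weight $\lambda$, $g(t)$ scales $v$ by $t^{\lambda_{n^2+1}+\dots+\lambda_m}=t^0=1$, so $v$ is $g(t)$-invariant for every $t\in\C^*$. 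On the other hand $g(t)$ fixes each variable of $X^{(1)},\dots,X^{(n^2)}$ and multiplies each variable of $X^{(n^2+1)},\dots,X^{(m)}$ by $t$; therefore $g(t)$-invariance forces every monomial of $v$ to have degree zero in the variables of $X^{(n^2+1)},\dots,X^{(m)}$, i.e.\ to involve none of them.

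Finally I would observe that if $(A_1,\dots,A_{n^2})\in\Sing_{n,n^2}$ then $(A_1,\dots,A_{n^2},0,\dots,0)\in\Sing_{n,m}$, because $\sum_{i=1}^m\mu_iA_i=\sum_{i=1}^{n^2}\mu_iA_i$ is singular for every $\mu\in\Q^m$; thus $\Sing_{n,n^2}\times\{0\}^{m-n^2}\subseteq\Sing_{n,m}$. Since $v$ vanishes on all of $\Sing_{n,m}$ and involves only the first $n^2$ matrices, it vanishes on $\Sing_{n,n^2}$, i.e.\ $v\in I(\Sing_{n,n^2})$ under the embedding $I(\Sing_{n,n^2})\hookrightarrow I(\Sing_{n,m})$. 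The only step doing real work is the second one — excluding $\GL_m$-irreducibles with more than $n^2$ parts — and this is exactly what Lemma~\ref{lem:left-right-reps} and Remark~\ref{rem:m-greater-n-squared} provide; once that is in hand, the one-parameter-subgroup test in step three (which also neatly bypasses any fuss about weight-sign conventions) and the inclusion of the smaller singular locus are routine.
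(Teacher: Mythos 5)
Your proof is correct and follows essentially the same route as the paper: the Cauchy decomposition plus the vanishing of $S^{\lambda}\C^{q}$ for $\ell(\lambda)>q$ forces the highest weight to have at most $n^2$ parts, hence the highest weight vector involves only the variables of $X^{(1)},\dots,X^{(n^2)}$ and lies in $I(\Sing_{n,n^2})$. The only difference is that you make explicit two steps the paper leaves implicit — the one-parameter-subgroup argument showing a weight-$\lambda$ vector cannot involve the later variables, and the inclusion $\Sing_{n,n^2}\times\{0\}^{m-n^2}\subseteq\Sing_{n,m}$ — which is a welcome amplification rather than a different method.
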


\begin{proof} 
	Let $ V $ be an irreducible $ G $-submodule of $ I(\Sing_{n, m})_{d} $. Then $ V\hookrightarrow S^{\lambda} (\C^m) \otimes S^{\lambda}(\C^{n^2}) $ by the above considerations for some partition $ \lambda \vdash d$ with at most $ n^2 $ parts. The highest weight vector $f_\lambda\in V$, by definition has weight given by $\lambda$ \cite[Chapter 10.2]{michalek2021invitation}. In particular, it is a linear combination of monomials that use only the variables in  $ X^{(1)},\ldots, X^{(n^2)} $. That is $f_\lambda\in I(\Sing_{n, n^2})$.	
\end{proof}
\begin{cor}\label{cor:n^2}
For fixed $n$, all ideals $I_{n,m}$ are generated in degree at most $d$ if and only $I_{n,n^2}$ is generated in degree $d$.
\end{cor}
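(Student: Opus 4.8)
The plan is to deduce the corollary from \Cref{thm:m-greater-n-squared} together with complete reducibility of the $G$-action on the polynomial ring, treating the regimes $m>n^2$, $m=n^2$ and $m<n^2$ essentially uniformly. Write $q=n^2$. One direction is trivial: if every $I_{n,m}$ is generated in degree at most $d$, then in particular $I_{n,q}$ is. For the converse, assume $I_{n,q}$ is generated in degree at most $d$; the case $m=q$ is a tautology, so fix $m\neq q$, put $R=\C[X^{(1)},\dots,X^{(m)}]$, and let $J_m\subseteq R$ be the ideal generated by all elements of $I_{n,m}$ of degree at most $d$. Since $J_m\subseteq I_{n,m}$ automatically, the whole task is to show $I_{n,m}\subseteq J_m$.

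First I would record two polynomial-ring homomorphisms linking the ``small'' ring on the first $\min(m,q)$ matrices of variables to the ``large'' ring on the first $\max(m,q)$: the inclusion $\psi$ of the small ring into the large one, and, when $m<q$, the retraction $\phi$ of the large ring onto the small one that kills the extra matrices $X^{(\min+1)},\dots,X^{(\max)}$, so that $\phi\circ\psi=\mathrm{id}$. A one-line check of which linear combinations of slices are forced to be singular (zero-padding the missing slices, resp.\ discarding the extra slices) yields at once $\psi(I_{n,q})\subseteq I_{n,m}$ when $m>q$, and $\phi(I_{n,q})\subseteq I_{n,m}$ together with $\psi(I_{n,m})\subseteq I_{n,q}$ when $m<q$. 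In particular the degree-$\leq d$ generators of $I_{n,q}$ are carried, by $\psi$ (if $m>q$) or by $\phi$ (if $m<q$), into $(I_{n,m})_{\leq d}\subseteq J_m$.

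The representation-theoretic core is then as follows. Since $G=\GL_m\times\GL_n\times\GL_n$ is reductive and $\mathrm{char}\,\C=0$, every graded component of $I_{n,m}$ splits into irreducible $G$-submodules, so it suffices to prove that each irreducible $V\subseteq(I_{n,m})_e$ lies in $J_m$; this is clear for $e\leq d$, so assume $e>d$ and let $f_V$ be a highest weight vector of $V$. When $m>q$, \Cref{thm:m-greater-n-squared} gives $f_V\in I_{n,q}$; when $m<q$, $f_V$ trivially involves only the variables of $X^{(1)},\dots,X^{(m)}$ and $\psi(f_V)\in I_{n,q}$. Either way one gets a homogeneous element $\tilde f\in(I_{n,q})_e$ that maps to $f_V$ under $\psi$ (resp.\ $\phi$); writing $\tilde f=\sum_i h_i g_i$ with $g_i\in(I_{n,q})_{\leq d}$ and applying that same map then exhibits $f_V\in J_m$, by the containments above. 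Finally $J_m$ is a $G$-submodule of $R$ — it is generated by the $G$-stable subspace $(I_{n,m})_{\leq d}$, and $G$ acts on $R$ by algebra automorphisms — so $V\cap J_m$ is a nonzero $G$-submodule of the irreducible module $V$, forcing $V\subseteq J_m$. Hence $I_{n,m}=J_m$ is generated in degree at most $d$.

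I expect the only genuinely non-formal ingredient to be \Cref{thm:m-greater-n-squared}, which supplies the highest weight vectors of $I_{n,m}$ inside $I_{n,q}$ for $m>q$; the rest is Schur's lemma and bookkeeping with $\psi$ and $\phi$. The point to watch is the case split $m>q$ versus $m<q$: for $m<q$ one should not try to argue that ``$\Sing_{n,m}$ is a generic linear section of $\Sing_{n,q}$'', since the coordinate slice $X^{(m+1)}=\dots=X^{(q)}=0$ is highly special among linear subspaces of $\C^q\otimes\C^n\otimes\C^n$ and reducedness of the scheme-theoretic slice is not available for free; the retraction $\phi\circ\psi=\mathrm{id}$ is exactly the device that lets one bypass this.
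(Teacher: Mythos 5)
Your proof is correct, and it takes a genuinely different (and in places cleaner) route than the paper's, chiefly in the regime $m<n^2$. The paper's argument for $m<n^2$ decomposes $(I_{n,n^2})_d$, $(R_{n,n^2})_{d'}$, and $(I_{n,n^2})_{d+d'}$ into isotypic pieces according to whether the Young diagrams have at most $m$ or more than $m$ rows, compares the multiplication maps $M_{m,d,d'}$ and $M_{n^2,d,d'}$ on these pieces, and closes with a contradiction argument; the key structural input there is the Littlewood--Richardson-type observation that tensoring against a module with more than $m$ rows cannot lower the number of rows. You instead package everything into the two ring homomorphisms $\psi$ (the variable inclusion) and $\phi$ (the retraction killing the surplus matrices), check $\phi\circ\psi=\mathrm{id}$ and the elementary containments $\phi(I_{n,q})\subseteq I_{n,m}$, $\psi(I_{n,m})\subseteq I_{n,q}$ (for $m<q$) and $\psi(I_{n,q})\subseteq I_{n,m}$ (for $m>q$), and transport generating sets along these maps. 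For $m<q$ this is fully elementary: given $p\in I_{n,m}$, writing $\psi(p)=\sum h_ig_i$ with $g_i\in(I_{n,q})_{\le d}$ and applying $\phi$ already gives $p\in J_m$ with no representation theory whatsoever --- so the irreducible/\hwv\ bookkeeping in that branch of your argument is optional and could be stripped out. The only genuinely representation-theoretic content is, as you say, concentrated in \Cref{thm:m-greater-n-squared} for $m>n^2$, which both proofs use identically. Your caveat about the linear section $X^{(m+1)}=\dots=X^{(q)}=0$ being special and reducedness not being free is well-taken; the retraction device does indeed sidestep that pitfall. Net: same destination, but you replace the paper's isotypic-decomposition-plus-contradiction step with a concrete $\phi\circ\psi=\mathrm{id}$ argument, which buys transparency and makes the $m<n^2$ case independent of the $\GL$-module structure.
\newcommand{\hwv}{highest-weight-vector}
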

\begin{proof}
Let $R_{n,m}:=\CC[X^{(1)},\dots,X^{(m)}]$. 
The multiplication map induces a map of $\GL_m$ representations:
\[M_{m,d,d'}:(I_{n,m})_d\otimes (R_{n,m})_{d'}\rightarrow (I_{n,m})_{d+d'}.\]
The image of $M_{m,d,d'}$ are those elements in the ideal that are of degree $d+d'$ and are generated by elements of degree $d$. In particular, our assumptions tell us that $M_{m,d,d'}$ is surjective for $m=n^2$ and any $d'$. 

First we consider the more basic case $m<n^2$. 
Let us break $(I_{n,n^2})_d$ (resp.~$(R_{n,n^2})_{d'}$, resp.~$(I_{n,n^2})_{d+d'}$) into a direct sum of representations $I_1\oplus I_2$ (resp.~$R_1\oplus R_2$, resp.~$J_1\oplus J_2$) , where in $I_1$ (resp.~$R_1$, resp.~$J_1$) are all isotypic components corresponding to Young diagrams with at most $m$ rows and in $I_2$ (resp.~$R_2$, resp.~$J_2$) are those with more than $m$ rows.  We note that the highest weight vectors in $I_1$ (resp.~$R_1$, resp.~$J_1$) are exactly the highest weight vectors in $(I_{n,m} )_d$ (resp.~$(R_{n,m})_{d'}$, resp.~$(I_{n,m})_{d+d'}$). Further, all Young diagrams corresponding to representations in  $M_{n^2,d,d'}\left(I_2\otimes (R_{n,n^2})_{d'}\right)$ and $M_{n^2,d,d'}\left((I_{n,n^2})_d\otimes (R_2)_{d'}\right)$ must have at least $m+1$ rows. Hence, the image of $I_1\otimes R_1$ must contain $J_1$. For each Young diagram with at most $m$ rows, the multiplicity of the corresponding isotypic component in the representation $I_1\otimes R_1$ is the same as in $(I_{n,m})_d\otimes (R_{n,m})_{d'}$ (but the representations are not the same, as these are representations of different groups). Similarly $J_1$ and  $(I_{n,m})_{d+d'}$ are represented by exactly the same Young diagrams. For contradiction assume that $M_{m,d,d'}$ is not surjective. This would mean that there is a (highest weight) vector in the kernel of $M_{m,d,d'}$ that, after mapping to $I_1\otimes R_1$ is not in the kernel of $M_{n^2,d,d'}$. This is not possible, as multiplication of polynomials gives the same result, no matter in a ring of how many variables we regard the polynomials.

With these tools the more important case $m>n^2$ follows easily. Indeed, to show that $M_{m,d,d'}$ is surjective it is enough to prove that every highest weight vector is in the image. But any highest weight vector must be in fact an element of $(I_{n,n^2})_{d+d'}$. In particular, it must be in the image of the smaller space 
$(I_{n,n^2})_d\otimes (R_{n,n^2})_{d'}\subset (I_{n,m})_d\otimes (R_{n,m})_{d'}$.
\end{proof}
\begin{rem}
Corollary \ref{cor:n^2} gives another easy proof of Theorem \ref{thm:2by2case-groebner-basis}, as it is enough to check it for $m=4$. We note that further improvements are possible, due to the fact that $\Sing_{n,m}$ is contained in a subspace variety. This topic will be explained in the forthcoming section.
\end{rem}
\section{Connections to the Fano scheme}

Let $I_{n,m}\subset\C[x_{ij}^{(k)}]=\C[\C^m\otimes\C^n\otimes\C^n]$ be the homogeneous vanishing ideal of $\Sing_{n,m}$. As shown in the last section, there are no 'new' generators for $m>n^2$. This followed solely by considering the polynomial ring $\C[x_{ij}^{(k)}]$ and the action of $\GL_m\times\GL_n\times\GL_n$ on it.
In particular, we did not make use of the fact that we are only interested in generators of $I_{n,m}$.

We do this now using Fano schemes. Let $D_n\subset\P^{n^2-1}$ be the subvariety consisting of all singular matrices.
Let $k$ be a positive integer. We denote by $\mathbf{F}_k(D_n)$ the Fano scheme of $D_n$, which parametrizes the $k$-dimensional planes in $\P^{n^2-1}$ that are subvarieties of $D_n$.

Let $X\in\Sing_{m,n}\subset\C^m\otimes\C^n\otimes\C^n$. Let $L$ be the image of the flattening $\C^m\to\C^n\otimes\C^n$. Since $X\in\Sing_{m,n}$ every point in $L$ is singular which means $L\subset D_n$. By definition we hence see that $L\in\mathbf{F}_{\dim(L)-1}(D_n)$. This Fano scheme has been extensively studied for example by Chan and Ilten in \cite{CI}. In the following we will make use of these results. 

We show how to get new generators of $I_{n,m}$ using equations coming from this Fano scheme.

\begin{thm}
\label{thm:fano_empty}
Let $n>1$ and $m\ge n^2-n+1$. The ideal $I_{n,m}$ is not generated in degree $n$.
\end{thm}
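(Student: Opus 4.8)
The plan is to produce equations of degree $r:=n^2-n+1$ in $I_{n,m}$ coming from the subspace variety containing $\Sing_{n,m}$, and then to use a Littlewood--Richardson argument to show they cannot lie in the ideal generated by the degree-$\le n$ part of $I_{n,m}$. Note that $r-n=(n-1)^2\ge 1$ since $n>1$, so $r>n$, and $m\ge r$ by hypothesis. First I would make the Fano input explicit: if $X\in\Sing_{n,m}$ then the image $L$ of the flattening $\C^m\to\C^n\otimes\C^n$ is a linear subspace with $\P(L)\subseteq D_n$, hence a point of $\mathbf{F}_{\dim L-1}(D_n)$; by the classical bound of Flanders on spaces of singular matrices --- equivalently, by the emptiness $\mathbf{F}_{n^2-n}(D_n)=\varnothing$ recorded in \cite{CI} --- we have $\dim L\le n^2-n=r-1$. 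Therefore the generic $m\times n^2$ flattening matrix $T$, whose rows are the vectorised slices $X^{(1)},\dots,X^{(m)}$, has rank $\le r-1$ at every point of $\Sing_{n,m}$, so every $r\times r$ minor of $T$ --- a nonzero polynomial of degree $r$, and such minors exist because $r\le\min(m,n^2)$ --- lies in $I_{n,m}$. This realises the containment of $\Sing_{n,m}$ in a subspace variety mentioned after Corollary \ref{cor:n^2}.

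Next I would compare $\GL_m$-modules, writing $R:=\C[X^{(1)},\dots,X^{(m)}]$. The span of the $r\times r$ minors of $T$ is a nonzero $\GL_m$-submodule of $(I_{n,m})_r$ all of whose irreducible constituents are isomorphic to $S^{(1^r)}\C^m=\Wedge{r}\C^m$, since $\GL_m$ acts on the rows of $T$ and an $r\times r$ minor lies in $\Wedge{r}$ of the row space. On the other hand $(I_{n,m})_d=0$ for $d<n$, and $(I_{n,m})_n$ is spanned by the forms $\det(\sum_k\lambda_kX^{(k)})$, so $(I_{n,m})_n\cong S^{(n)}\C^m$ as a $\GL_m$-module (for $n=2$ this is \cite[Proposition 5.1]{Makam_Wigderson_2019}). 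Hence the ideal $\widetilde J$ generated by $(I_{n,m})_{\le n}$ has $\widetilde J_r=\im\bigl((I_{n,m})_n\otimes R_{r-n}\to R_r\bigr)$, and by Lemma \ref{lem:left-right-reps} the $\GL_m$-module $R_{r-n}$ equals $\bigoplus_{\mu\vdash r-n}S^\mu\C^m\otimes S^\mu\C^{n^2}$, so $\widetilde J_r$ is a quotient of a direct sum of modules $S^{(n)}\C^m\otimes S^\mu\C^m$ with $\mu\vdash r-n$. By Pieri's rule every irreducible constituent $S^\nu\C^m$ of such a tensor product has $\nu\supseteq\mu$ with $\nu/\mu$ a horizontal strip of $n$ boxes, and since $n\ge 2$ no such $\nu$ is the single column $(1^r)$. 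Thus $S^{(1^r)}\C^m$ does not occur in $\widetilde J_r$, whereas it does occur in $(I_{n,m})_r$; hence $\widetilde J\subsetneq I_{n,m}$, i.e.\ $I_{n,m}$ is not generated in degree $n$. Since this argument uses only $n>1$ and $m\ge n^2-n+1$, it proves the theorem.

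Steps as above already show that the $r\times r$ minors are not in the ideal generated by the forms $\det(\sum_k\lambda_kX^{(k)})$ (which span exactly $S^{(n)}\C^m$), so the statement highlighted in the abstract requires no further input. For the stronger phrasing ``not generated in degree $n$'', the step I expect to be the main obstacle is the identity $(I_{n,m})_n\cong S^{(n)}\C^m$, i.e.\ that $(I_{n,m})_n$ has no $S^{(1^n)}\C^m$-summand --- equivalently, that $\Sing_{n,n}$ carries no antisymmetric (in the $n$ slices) multilinear equation beyond multiples of the mixed discriminant. This is precisely what is needed: an $S^{(1^n)}\C^m$-summand of $(I_{n,m})_n$ paired with $S^{(1^{r-n})}\C^m\subseteq R_{r-n}$ would create $S^{(1^r)}\C^m$ inside $\widetilde J_r$ and break the argument. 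For $n=2$ it follows from \cite{Makam_Wigderson_2019}; for general $n$ I would try to argue with highest weight vectors as in the proof of Theorem \ref{thm:m-greater-n-squared}, or else sidestep a full description of $(I_{n,m})_n$ by exhibiting directly a linear functional on $R_r$ vanishing on $\widetilde J_r$ but not on a chosen $r\times r$ minor --- for instance one built from an arc through a general point of $\Sing_{n,m}$, in the spirit of the non-radicality argument of Section \ref{sec:2by2}. By contrast, the Fano-theoretic input and the Littlewood--Richardson bookkeeping are routine.
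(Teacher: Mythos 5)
Your proposal is correct and follows essentially the same route as the paper: the Flanders/Fano bound gives the $(n^2-n+1)\times(n^2-n+1)$ minors of the flattening as degree-$r$ elements of $I_{n,m}$, and a Littlewood--Richardson (here Pieri) argument shows that $S^{(1^r)}\C^m$ cannot occur in the image of $(I_{n,m})_n\otimes R_{r-n}$ since every constituent there contains $(n)$. The one input you flag as a possible obstacle --- that $(I_{n,m})_n$ is exactly the span of the forms $\det(\sum_k\lambda_k X^{(k)})$, hence a single copy of $S^{(n)}\C^m$, for all $n$ --- is available for general $n$: the paper takes it from \cite[Corollary B.6 and Proposition 5.1]{Makam_Wigderson_2019}, so no further argument is needed there.
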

\begin{proof}
Let $X=(X_1,\dots,X_m)\in\Sing_{n,m}\subset\C^m\otimes\C^n\otimes\C^n$. Then $X$ defines the linear subspace $L=\mathrm{span}(X_1,\dots,X_m)\subset (\C^n\otimes\C^n)^m$ which is contained in $D_n$.
Let $k$ be the dimension of $L$, then $L$ defines a point in $\mathbf{F}_{k-1}(D_n)$.

By \cite[Proposition 2.6.]{CI} $\mathbf{F}_{k-1}(D_n)$ is empty if and only if $k> n(n-1)$. Since $m\ge n^2-n+1$, we have $m>k$ and the matrices $X_1,\dots,X_m$ are linearly dependent.

Equivalently the rank of the flattening $\C^m\to\C^n\otimes\C^n$ is at most $n^2-n$, which means that all $(n^2-n+1)\times(n^2-n+1)$-minors of this map vanish. More explicitly those are the minors of the matrix
\[
\begin{pmatrix}
x_{11}^{(1)} 	& x_{12}^{(1)} 	& \dots 	& x_{nn}^{(1)}\\
\vdots 			& \vdots				&		& \vdots\\
x_{11}^{(m)} & x_{12}^{(m)} & \dots & x_{nn}^{(m)}
\end{pmatrix}
\]
if coordinates are chosen such that $X_l=(x_{ij}^{(l)})_{ij}$ for $l=1,\dots,m$.
By construction every subspace $L$ coming from $X\in \Sing_{n,m}$ satisfies these equations and therefore they are contained in the ideal $I_{n,m}$.

To show that these polynomials of $I_{n,m}$ are not generated by the polynomials in degree up to $n$ we consider the group action of $G=\GL_m\otimes\GL_n\otimes\GL_n$ on $I_{m,n}$.
The equations we found come from the $(n^2-n+1)$-st exterior power of the flattening map $\CC^m\rightarrow (\CC^n\otimes\CC^n)$. Thus correspond to $\bigwedge^{n^2-n+1}\CC^m\otimes \bigwedge^{n^2-n+1}(\CC^n\otimes\CC^n)$. As a $G$ representation this is a union of irreducible representations (with multiplicity one) indexed by triples of Young diagrams $(1,\dots,1)=1^{n^2-n+1},\lambda,\lambda^T$, where $\lambda$ is (any) partition of $n^2-n+1$ fitting in an $n\times n$ square and $\lambda^T$ is its transpose.

By \cite[Corollary B.6 and Proposition 5.1]{Makam_Wigderson_2019} we know that $I_{n,m}$ up to degree $n$ consists just of one irreducible representation indexed by the triple of Young diagrams: $(n),1^n,1^n$.
The image of the tensor product by the multiplication map: $(I_{n,m})_d \otimes \CC[X^{(1)},\dots,X^{(m)}]_{d'}\rightarrow (I_{n,m})_{d+d'}$ is the part of the ideal in the degree $d+d'$ generated by elements in degree $d$. Taking $d=n$, the domain of this map is a sum of irreducible representations indexed by triples of Young diagrams $\mu_1,\mu_2,\mu_3$, where, by the Littlewood-Richardson rule, $\mu_1$ must contain $(n)$ (and $\mu_2,\mu_3$ must contain $1^n$, however we do not need this). In particular, the equations we presented cannot belong to the image of the map, as $1^{n^2-n+1}$ does not contain $(n)$.
\end{proof}

\begin{rem}
In the case $n=2$, the new equations are exactly the cubics in \cref{thm:2by2case-groebner-basis}. In the case $n=2$, those are all generators of $I_{2,m}$ as we have seen. However, for larger $n$ this is most likely not true.
\end{rem}

We recall that by \cite[Proposition 5.1]{Makam_Wigderson_2019} for any $d<n$ the degree $d$ component $(I_{n,m})_d$ is $\{0\}$ and for $d=n$ we have $(I_{n,m})_n=\mathrm{span}\{\det(\sum c_iX_i)\colon c_i\in\C\}$.

\begin{cor}
The ideal generated by the degree $n$ component $(I_{n,m})_n=\mathrm{span}\{\det(\sum c_iX_i)\colon c_i\in\C\}$ of $I_{n,m}$ is never radical if $m\ge n^2-n+1$.
\end{cor}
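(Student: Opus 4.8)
Write $J$ for the ideal generated by the degree-$n$ component $(I_{n,m})_n$. By \cite[Proposition 5.1]{Makam_Wigderson_2019} we have $(I_{n,m})_d=\{0\}$ for $d<n$ and $(I_{n,m})_n=\mathrm{span}\{\det(\sum_i c_iX_i)\colon c\in\C^m\}$, so $J$ is precisely the ideal generated by the determinantal forms. The plan is to show that $\sqrt J=I_{n,m}$ while $J\subsetneq I_{n,m}$, which immediately forces $J$ to be non-radical. (For $n=1$ the variety $\Sing_{1,m}$ is the origin and the statement is uninteresting, so we work under the hypothesis $n\ge 2$ of Theorem \ref{thm:fano_empty}.)

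First I would compute the radical of $J$. Since $I_{n,m}$ is a vanishing ideal it is radical, and $J\subseteq I_{n,m}$ gives $\sqrt J\subseteq I_{n,m}$. For the reverse inclusion it suffices, by the Nullstellensatz, to check $V(J)=\Sing_{n,m}$. A tuple $A=(A_1,\dots,A_m)$ lies in $V(J)$ iff $\det(\sum_i c_iA_i)=0$ for all $c\in\C^m$; viewing $c\mapsto\det(\sum_i c_iA_i)$ as a polynomial in $c$, vanishing on $\Q^m$ is equivalent to vanishing identically, hence to vanishing on all of $\C^m$. Thus $A\in V(J)$ iff every $\Q$-linear combination of the $A_i$ is singular, i.e.\ iff $A\in\Sing_{n,m}$, so $\sqrt J=I(\Sing_{n,m})=I_{n,m}$.

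Finally, suppose toward a contradiction that $J$ is radical. Then $J=\sqrt J=I_{n,m}$, so $I_{n,m}$ is generated by $(I_{n,m})_n$ — that is, generated in degree $n$, the lower-degree components being zero. This contradicts Theorem \ref{thm:fano_empty}, which asserts that for $n>1$ and $m\ge n^2-n+1$ the ideal $I_{n,m}$ is \emph{not} generated in degree $n$. Hence $J$ is not radical. The whole argument is a bookkeeping consequence of Theorem \ref{thm:fano_empty}; the only point to verify carefully is the set-theoretic identity $V(J)=\Sing_{n,m}$, which holds essentially by the definition of $\Sing_{n,m}$, so there is no real obstacle here.
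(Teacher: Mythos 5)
Your proposal is correct and is exactly the argument the paper intends (the corollary is stated without proof, as an immediate consequence of Theorem \ref{thm:fano_empty}): since the determinantal forms cut out $\Sing_{n,m}$ set-theoretically, $\sqrt{J}=I_{n,m}$, and radicality of $J$ would force $I_{n,m}$ to be generated in degree $n$, contradicting the theorem. Your remarks on passing from $\Q^m$ to $\C^m$ and on the degenerate case $n=1$ are sensible and do not change the substance.
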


\begin{rem}
We note that new polynomials in $I_{m,n}$ from \cref{thm:fano_empty} were a result of rank restrictions on the map $\C^m\to\C^n\otimes\C^n$.
However, there are also two more canonical flattenings of tensors in $\C^m\otimes\C^n\otimes\C^n$, namely $\C^n\to\C^m\otimes\C^n$ for the two different copies of $\C^n$.
In the next part we make use also of those two flattenings and show how to construct more polynomials in $I_{n,m}$.
This new construction does not require the integer $m$ to be as large.
\end{rem}

Instead of fixing $m$ immediately and working in $\Sing_{m,n}$ we consider more generally a tensor $X\in\C^m\otimes\C^n\otimes\C^n$ and study its image by the flattening map in $\C^n\otimes\C^n$.

\begin{prop}
\label{prop:spaces_almost_maximal}
Let $k$ be a positive integer and let $L\subset\C^n\otimes\C^n$ be a subspace of dimension $k$. If $L$ is contained in $D_n$ and $k\ge n^2-2n+2$, then after a change of coordinates either every matrix in $L$ has
\begin{enumerate}
\item a zero-row, or
\item a zero-column, or
\item the form
\[
\begin{pmatrix}
\lambda_1 	& \dots & \lambda_{n^2-2n+1} & \lambda_{n^2-2n+2}\\
\vdots		& \dots &	0	& 0\\
\vdots		& \dots &	\vdots	& \vdots\\
\lambda_n	& \dots & 0 & 0
\end{pmatrix},
\]
or
\item the form 
\[
\begin{pmatrix}
\lambda_1 	& \dots & \lambda_{n^2-2n+1} & \lambda_{n^2-2n+2}\\
\vdots		& \dots &	0	& 0\\
\vdots		& \dots &	\vdots	& \vdots\\
\lambda_n	& \dots & 0 & 0
\end{pmatrix}^T.
\]
\end{enumerate}
\end{prop}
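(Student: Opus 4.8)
The four alternatives in the statement are exactly the \emph{compression spaces} of $n\times n$ matrices of dimension $\ge n^{2}-2n+2$: for fixed subspaces $U,W\subseteq\C^{n}$ with $\dim W<\dim U$ the space $\{M : M(U)\subseteq W\}$ consists of singular matrices, and among those of the two largest possible dimensions $n^{2}-n$ and $n^{2}-2n+2$ one gets $(\dim U,\dim W)\in\{(1,0),(n,n-1)\}$, i.e.\ a zero column and a zero row (alternatives (2) and (1)), and $(\dim U,\dim W)\in\{(2,1),(n-1,n-2)\}$ (alternatives (3) and (4)). So the real content is that for $k\ge n^{2}-2n+2$ there is \emph{no other} $k$-dimensional linear space of singular matrices; equivalently, membership in the Fano scheme $\mathbf{F}_{k-1}(D_{n})$ with $n^{2}-2n+1\le k-1\le n^{2}-n-1$ forces $L$ into one of these four spaces.

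My first move would be to check whether \cite{CI} already records the description of $\mathbf{F}_{k-1}(D_{n})$ near the top dimension: \cite[Prop.~2.6]{CI} pins down the exact vanishing threshold $k-1=n^{2}-n-1$, and since the paper treats \cite{CI} as its foundation it is quite plausible that the top components are identified there as the $(k-1)$-planes contained in the maximal compression spaces. If so the Proposition is a translation: $L$ defines a point of $\mathbf{F}_{k-1}(D_{n})$, hence is contained in a $\GL_{n}\times\GL_{n}$-translate of one of the four spaces, and a change of coordinates produces the displayed normal forms.

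If a self-contained proof is wanted, I would argue as follows. First, a linear space of matrices of rank $\le n-2$ has dimension at most $n(n-2)<k$ (Flanders' theorem), so a general $M\in L$ has $\rk M=n-1$ and is a smooth point of the hypersurface $D_{n}=\{\det=0\}$. Next, a linear space contained in a hypersurface lies in the embedded tangent space at each of its smooth points; by Jacobi's formula the differential of $\det$ at $M$ sends $N$ to $\tr(\mathrm{adj}(M)N)$, and $\mathrm{adj}(M)=u_{M}y_{M}^{T}$ is the rank-one matrix with $Mu_{M}=0$ and $y_{M}^{T}M=0$, so $\tr(\mathrm{adj}(M)N)=0$ for every general $M$ and every $N\in L$. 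Hence $L^{\perp}\supseteq\Lambda:=\langle\,\mathrm{adj}(M):\rk M=n-1\,\rangle$, whence $\dim\Lambda\le n^{2}-\dim L\le 2n-2$, and $\Lambda$ is spanned by rank-one matrices $u_{M}y_{M}^{T}$. Finally, let $K,K^{*}\subseteq\P^{n-1}$ be the closures of the loci of the kernel lines $[u_{M}]$ and the cokernel lines $[y_{M}]$. If $K=\{[u_{0}]\}$ then $Nu_{0}=0$ for all general, hence all, $N\in L$ and, after a coordinate change, $L$ has a zero column (alternative (2)); symmetrically $K^{*}$ a point gives alternative (1). When $K$ and $K^{*}$ are both positive-dimensional I would combine $\dim\Lambda\le 2n-2$ with the slicing identity at a general rank-$(n-1)$ point — writing $M_{0}=\mathrm{diag}(I_{n-1},0)\in L$ and $N=\bigl(\begin{smallmatrix}A_{N}&b_{N}\\ c_{N}^{T}&d_{N}\end{smallmatrix}\bigr)$, the relation $\det(M_{0}+tN)\equiv0$ forces $d_{N}=0$ and $c_{N}^{T}\mathrm{adj}(I+tA_{N})b_{N}\equiv0$, which pins down the off-diagonal data of $L$ relative to $\ker M_{0}$ and $\ker M_{0}^{T}$ — to conclude that the kernel lines of general elements all lie in a fixed $2$-plane and their images in a fixed hyperplane, i.e.\ every $N\in L$ carries a fixed $2$-dimensional subspace into a fixed line (alternative (3)), or the transposed statement holds (alternative (4)).

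The main obstacle is exactly this last, positive-dimensional case: the bound $\dim\Lambda\le 2n-2$ together with the inequalities coming from a single slice do not by themselves force the rank-$2$-into-rank-$1$ structure, and one must rule out hypothetical \emph{exotic} spaces of singular matrices of dimension $n^{2}-2n+2,\dots,n^{2}-n-1$. I expect the cleanest honest route is to import the classification of the top-dimensional irreducible components of $\mathbf{F}_{k-1}(D_{n})$ from \cite{CI} rather than to reprove it; the differential-geometric argument above is the fallback.
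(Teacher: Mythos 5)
Your primary recommendation --- import the result from \cite{CI} rather than reprove it --- is exactly what the paper does, so the approach matches. Two small corrections on the specifics. First, the reference you guessed, \cite[Prop.~2.6]{CI}, is the emptiness threshold (it is what Theorem~\ref{thm:fano_empty} uses); the statements the paper actually invokes here are \cite[Corollary~5.1]{CI}, which says that in the range $k\ge n^{2}-2n+2$ the Fano scheme $\mathbf{F}_{k-1}(D_{n})$ consists \emph{only} of compression subspaces $\mathfrak{C}_{k-1}(s)$ (i.e.\ there is a subspace $V$ of dimension $s+1$ mapped into a subspace $W$ of dimension $s$ by every element of $L$), and \cite[Remark~5.2]{CI}, which narrows the possible $s$ to $\{0,1,n-2,n-1\}$, giving the four normal forms. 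Second, you phrased the needed import as ``the classification of the top-dimensional irreducible components,'' but that would not suffice: the proposition makes a claim about an arbitrary $L$, so one needs the full set-theoretic description of $\mathbf{F}_{k-1}(D_{n})$ in this range, not just its top components --- which is precisely what Corollary~5.1 of \cite{CI} supplies. Your differential-geometric fallback (tangent-space containment at smooth points, $L^{\perp}$ contains the span of adjugates, then a kernel/cokernel analysis) is a sensible opening, but as you yourself flag, the positive-dimensional case is where the actual difficulty sits; the adjugate bound $\dim\Lambda\le 2n-2$ plus a single slice does not by itself rule out exotic singular spaces, and closing that gap is essentially the content of Chan--Ilten's work. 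Deferring to them is the right call.
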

\begin{proof}
By \cite[Corollary 5.1]{CI} the Fano scheme $\mathbf{F}_{k-1}(D_n)$ consists only of compression subspaces, i.e. there exists $0\le s\le n-1$ such that $L$ is contained in the compression space $\mathfrak{C}_{k-1}(s)$. By definition this means there exists a subspace $V\subset \C^n$ of dimension $s+1$ that is mapped to a subspace $W\subset\C^n$ of dimension $s$ by every element of $L$. 

By \cite[Remark 5.2]{CI} the only possiblities for such $s$ are $s=0,1,n-1,n-2$ if $k\ge n^2-2n+2$. These correspond exactly to the four cases in the claim.
\end{proof}

\begin{prop}
\label{prop:more_equations}
Let $X\in\Sing_{m,n}$. Then the map $\C^m\to\C^n\otimes\C^n$ has rank at most $n^2-2n+2$ or one of the two flattenings $\C^n\to\C^m\otimes\C^n$ has rank at most $n-1$.
\end{prop}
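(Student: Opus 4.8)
The plan is to run the statement through the classification of almost-maximal spaces of singular matrices already packaged in Proposition~\ref{prop:spaces_almost_maximal}, and then convert its ``zero row / zero column'' alternatives into rank drops of the two remaining flattenings; the nontrivial geometric input (Chan--Ilten's description of the Fano scheme) is entirely contained in that proposition, so what is left is light.

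First I would pass from $X=(X_1,\dots,X_m)\in\Sing_{m,n}$ to the subspace $L=\mathrm{span}(X_1,\dots,X_m)\subseteq\C^n\otimes\C^n$ and set $k=\dim L$. By construction $k$ is exactly the rank of the flattening $\C^m\to\C^n\otimes\C^n$, and, as recorded at the start of the section, $L\subseteq D_n$. If $k\le n^2-2n+2$ the first alternative of the proposition already holds, so the only case to treat is $k>n^2-2n+2$; in particular $k\ge n^2-2n+2$, so Proposition~\ref{prop:spaces_almost_maximal} applies and, after a change of coordinates, $L$ takes one of the four listed shapes. The key step is to eliminate shapes (3) and (4): each of them forces $L$ into a compression space $\mathfrak{C}_{k-1}(s)$ with $s\in\{1,n-2\}$, whose ambient linear space of matrices---those compressing a fixed $(s+1)$-dimensional subspace of $\C^n$ into a fixed $s$-dimensional subspace---has dimension $n^2-n-s(n-s-1)=n^2-2n+2$; since $\dim L=k>n^2-2n+2$, $L$ cannot be contained in such a space.

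Hence $L$ has shape (1) or (2). In shape (1) there is, after the coordinate change, one fixed row---say the $n$-th---that vanishes in every matrix of $L$, i.e.\ $x^{(i)}_{nj}=0$ for all $i$ and $j$. The flattening $\C^n\to\C^m\otimes\C^n$ whose image is the span of the row slices $\sum_{i,j}x^{(i)}_{rj}\,e_i\otimes e_j$, $r=1,\dots,n$, then has its $r=n$ generator equal to $0$, so its rank is at most $n-1$. Shape (2) is the transpose situation: a fixed column of every slice vanishes, and the same argument applied to the other flattening $\C^n\to\C^m\otimes\C^n$ (spanned by the column slices) gives rank at most $n-1$. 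In all cases one of the two asserted conclusions holds.

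I expect the only place needing genuine care to be the dimension bookkeeping that discards shapes (3) and (4): one must check that the space of $n\times n$ matrices compressing a fixed $(s+1)$-plane into a fixed $s$-plane has dimension $n^2-n-s(n-s-1)$, which equals $n^2-2n+2$ precisely for $s\in\{1,n-2\}$ and is strictly smaller for intermediate $s$ (this is exactly why \cite[Remark~5.2]{CI}, as used in Proposition~\ref{prop:spaces_almost_maximal}, restricts $s$ to $\{0,1,n-2,n-1\}$ once $k\ge n^2-2n+2$). Everything else---bookkeeping about which of the three flattenings of a tensor in $\C^m\otimes\C^n\otimes\C^n$ records layers, rows and columns, and reading a rank drop off a vanishing slice---is routine.
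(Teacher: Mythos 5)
Your proof is correct and follows essentially the same route as the paper's: when the flattening $\C^m\to\C^n\otimes\C^n$ has rank exceeding $n^2-2n+2$, invoke Proposition~\ref{prop:spaces_almost_maximal} and read the rank drop of one of the other two flattenings off the common zero row or zero column. The only difference is that you make explicit the dimension count $n^2-(n-s)(s+1)=n^2-2n+2$ for $s\in\{1,n-2\}$ that excludes shapes (3) and (4) when $\dim L>n^2-2n+2$, a step the paper's proof leaves implicit.
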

\begin{proof}
Let $L$ be the image of the map $\C^m\to\C^n\otimes\C^n$ and denote its dimension by $k$. If $k>n^2-2n+2$, then by \cref{prop:spaces_almost_maximal} the subspace $L$ has one of the two given forms (1), (2) in Proposition \ref{prop:spaces_almost_maximal}. Hence, the rank of one of the maps $\C^n\to\C^m\otimes\C^n$ is at most $n-1$ depending if we are in case (1) or (2).
\end{proof}

\begin{rem}
From \cref{prop:more_equations} we get more equations for $\Sing_{n,m}$ as follows. Let $J_M$ be the ideal generated by the $n^2-2n+3$ minors of the map $\C^m\to\C^n\otimes\C^n$ and let $J_1,J_2$ be the ideals generated by the $n$ minors of the other two flattenings. Then $J_M\cdot J_1\cdot J_2\subset (I_{n,m})_{n^2+3}$.

Compared to \cref{thm:fano_empty} we should consider $m\ge n^2-2n+3$ so that the map $\C^m\to\C^n\otimes\C^n$ can have large enough rank.

Using this we get equations in the case $n=3$ and $m=6$ whereas earlier we needed $m\ge 7$ in this case.
\end{rem}

\begin{rem}
Our theorems suggest the following idea to construct more equations for $\Sing_{m,n}$. Consider a $k$-dimensional subspace $L\subset\C^n\otimes\C^n$ and study possible forms of this subspace using the Fano scheme $\mathbf{F}_{k-1}(D_n)$. The conditions/equations of the Fano scheme translate to equations for $\Sing_{m,n}$ as above.
\end{rem}

\bibliography{bibML}
\bibliographystyle{plain}

\newpage
\section*{Appendix A - Code for \cref{sec:2by2}}
\pagestyle{empty}

\lstinputlisting[caption={Sage code verifying that the Gröbner basis constructed in \cref{thm:2by2case-groebner-basis} is indeed a Gröbner basis. }]{ideal-generators}
\newpage

\end{document}